\documentclass[reqno]{amsart}
\usepackage[sorted-cites]{amsrefs}

\usepackage{amsmath,amssymb,amsthm, epsfig}
\usepackage{amsfonts}
\usepackage{amsmath}
\usepackage{amssymb}
\usepackage[mathscr]{euscript}

\newtheorem*{acknowledgement}{Acknowledgement}
\newtheorem{corollary}{Corollary}

\newtheorem{lemma}{Lemma}

\newtheorem{remark}{Remark}
\newtheorem{theorem}{Theorem}
\newtheorem{example}{Example}
\numberwithin{equation}{section}

\renewcommand{\div}{{\rm div}}

\begin{document}

\title[quasi-Einstein manifolds with boundary]{Geometry of compact quasi-Einstein\\ manifolds with boundary}
\author[R. Di\'ogenes]{Rafael Di\'ogenes}
\author[T. Gadelha]{Tiago Gadelha}
\author[E. Ribeiro Jr]{Ernani Ribeiro Jr}

\address[R. Di\'ogenes]{UNILAB, Instituto de Ci\^encias Exatas e da Natureza, Rua Jos\'e Franco de Oliveira, 62790-970, Reden\c{c}\~ao - CE, Brazil.}\email{rafaeldiogenes@unilab.edu.br}

\address[T. Gadelha]{Instituto Federal do Cear\'a - IFCE, Campus Maracana\'u, Av. Parque Central, 61939-140, Maracana\'u - CE, Brazil.}\email{tiago.gadelha@ifce.edu.br}

\address[E. Ribeiro Jr]{Universidade Federal do Cear\'a - UFC, Departamento  de Matem\'atica, Campus do Pici, Av. Humberto Monte, 60455-760, Fortaleza - CE, Brazil.}
\email[Corresponding author]{ernani@mat.ufc.br}

\thanks{T. Gadelha was partially supported by FUNCAP/Brazil}

\thanks{E. Ribeiro was partially supported by CNPq/Brazil [Grants: 305410/2018-0 \& 160002/2019-2] and CAPES/Brazil - Finance Code 001}

\thanks{Corresponding Author: E. Ribeiro (ernani@mat.ufc.br)}

\begin{abstract}
 In this article, we study the geometry of compact quasi-Einstein manifolds with boundary. We establish sharp boundary estimates for compact quasi-Einstein manifolds with boundary that improve some previous results. Moreover, we obtain a characterization theorem for such manifolds in terms of the surface gravity of the boundary components, which leads to a new sharp geometric inequality. In addition, we prove a boundary estimate for compact quasi-Einstein manifolds with (possibly disconnected) boundary in terms of the Brown-York mass.
 \end{abstract}

\date{\today}

\keywords{quasi-Einstein manifolds; boundary estimate; Einstein manifolds; Warped products}

\subjclass[2010]{Primary 53C20, 53C25; Secondary 53C65.}

\maketitle

\section{Introduction}
\label{intro}

According to the approach used by Case, Shu and Wei \cite{CaseShuWey}, and He, Petersen and Wylie \cite{He-Petersen-Wylie2012,Petersen-Chenxu}, a complete Riemannian manifold $(M^n,\,g),$ $n\geq 2,$ possibly with boundary $\partial M,$ will be called $m$-{\it quasi-Einstein manifold}, or simply {\it quasi-Einstein manifold}, if there exists a smooth potential function $u$ on $M^n$ obeying the following system
\begin{equation}
\label{eqdef}
\left\{%
\begin{array}{lll}
    \displaystyle \nabla^{2}u = \dfrac{u}{m}(Ric-\lambda g) & \hbox{in $M,$} \\
    \displaystyle u>0 & \hbox{on $int(M),$} \\
        \displaystyle u=0 & \hbox{on $\partial M,$} \\
    \end{array}%
\right.
\end{equation} for some constants $\lambda$ and $0<m<\infty.$ When $m=1$ we make the additional condition $\Delta u=-\lambda u$ in order to recover the static equation 
\begin{equation}
-(\Delta u)g+\nabla^{2}u -uRic =0.
\end{equation} Here, $\nabla^{2} u$ stands for the Hessian of $u$ and $Ric$ is the Ricci tensor of $g.$ We say that a quasi-Einstein manifold is {\it trivial} if its potential function $u$ is constant, otherwise, we say that it is {\it nontrivial}.

Our motivation for studying such manifolds is that an $m$-quasi-Einstein manifold corresponds to a base of a warped product Einstein metric; for more details see \cite[pg. 265]{Besse} (cf. \cite{He-Petersen-Wylie2012,Petersen-Chenxu}). Hence, it is directly related to the existence of Einstein metrics on a given manifold. Another motivation to investigate quasi-Einstein manifolds comes from the study of diffusion operators by Bakry and \'Emery \cite{bakry}, which is closely tied to the theory of smooth metric measure spaces. Moreover, $1$-quasi-Einstein manifolds are more commonly called {\it static spaces} and this terminology relies on the physical nature of the problem associated to the va\-cuum Einstein field equations; for more details, see \cite[Remark 2.3]{CaseShuWey}. Explicit examples of compact and noncompact quasi-Einstein manifolds can be found in, e.g., \cite{Besse,CaseP,CaseT,CaseShuWey,He-Petersen-Wylie2012,LuePage,Ernani_Keti,Rimoldi,Wang}.

He, Petersen and Wylie \cite{He-Petersen-Wylie2012,Petersen-Chenxu,HPW3} developed a successful study on quasi-Einstein manifolds with nonempty boundary. Among other things, they obtained a classification result for quasi-Einstein manifolds which are also Einstein.  They also studied quasi-Einstein manifolds with constant scalar curvature. The locally conformally flat case was discussed in \cite{CMMR,He-Petersen-Wylie2012}. In this article, we focus on nontrivial compact $m$-quasi-Einstein manifolds with boundary. Hence, by Theorem 4.1 of \cite{He-Petersen-Wylie2012}, they necessarily have $\lambda>0.$

At this point, it is useful to highlight some examples. Let us start with the standard hemisphere $\Bbb{S}^n_+.$

\begin{example}[\cite{Petersen-Chenxu}]
\label{example1}
Let  $\Bbb{S}^n_+$ be a standard hemisphere with metric $g=dr^2+\sin^2r g_{\Bbb{S}^{n-1}}$ and potential function $u(r)=\cos r,$ where $r$ is a height function with $r\leq\frac{\pi}{2}.$ Thus, $\Bbb{S}^n_+$ is a compact $m$-quasi-Einstein manifold with bo\-undary $\Bbb{S}^{n-1}.$ 
\end{example}

The next example has disconnected boundary.

\begin{example}[\cite{DG 2019}]
\label{example2}
Let $M=\Big[0,\sqrt{m}\pi\Big]\times\Bbb{S}^{n-1}$ be a Riemannian product with metric $g=dt^2+(n-2)g_{\Bbb{S}^{n-1}}$ and potential function $u(t,x)=\sin\left(\dfrac{1}{\sqrt{m}}t\right).$ Thus, $M$ is a compact $m$-quasi-Einstein manifold with disconnected boundary. Notice that the boundary is the union of two copies of $\Bbb{S}^{n-1}.$ 
\end{example}

We remark that if a nontrivial compact $m$-quasi-Einstein manifold with nonempty boundary has constant scalar curvature $R,$ then $R< n\lambda$ (see \cite[Corollary 4.3]{He-Petersen-Wylie2012}). Specifically, Example \ref{example1} has $R-n\lambda =-mn$ and Example \ref{example2} satisfies $R-n\lambda=-1.$

Boundary estimates are classical objects of study in geometry and physics. Besides being interesting on their own, such estimates are useful in proving new classification results and discarding some possible new examples  of special metrics on a given manifold. Among the contributions that motivated this work, we primarily mention the classical isoperimetric inequality and a result due to Shen \cite{Shen} and Boucher, Gibbons and Horowitz \cite{BGH} that asserts that the boundary $\partial M$ of a compact three-dimensional oriented static space (i.e., a $1$-quasi-Einstein ma\-ni\-fold) with connected boundary and scalar curvature $6$ must be a $2$-sphere whose area satisfies the inequality $|\partial M |\leq 4\pi,$ with equality if and only if $M^3$ is equivalent to the standard hemisphere. In the same spirit, boundary estimates for $V$-static metrics and static spaces were es\-ta\-blished in, e.g., \cite{Lucas,BalRi2,BS,Batista-Diogenes-Ranieri-Ribeiro JR,CGP,CEM,miaotam,HMR,Yuan}. In the recent work \cite[Theorem 1]{DG 2019}, Di\'ogenes and Gadelha proved an analogous boundary estimate for compact $m$-quasi-Einstein manifolds $M^n$ with connected boundary $\partial M$ by assuming the following conditions:
\begin{enumerate}
\item[(1)] $Ric^{\partial M}\geq \dfrac{R^{\partial M}}{n-1}g_{_{\partial M}},$

\item[(2)] $\inf_{\partial M} R^{\partial M}>0,$ and

\item[(3)] $R\leq n\lambda,$ where $R$ stands for the scalar curvature of $M^n.$  

\end{enumerate}  In \cite[Theorem 2]{FS}, Freitas and Santos showed that the same boun\-dary estimate holds by replacing the condition $(3)$ by the assumption $\mathcal{L}_{\nabla u}R\geq 0,$ where $\mathcal{L}_{\nabla u}R$ denotes the Lie derivative of the scalar curvature $R$ in the direction of $\nabla u.$ Besides, since $Ric^{\partial M}- \dfrac{R^{\partial M}}{n-1}g_{_{\partial M}}$ is trace-free, then the condition $(1)$ is equivalent to the assumption that the boundary is Einstein with the induced metric.

In this article, we shall provide new sharp boundary estimates for compact quasi-Einstein manifolds with boundary. To start with, we establish a rigidity result for the boundary by removing the aforementioned conditions $(2)$ and $(3)$ considered in \cite[Theorem 1]{DG 2019} and \cite[Theorem 2]{FS}. More precisely, we have the following result.

\begin{theorem}
\label{th1semhyp}
Let $\big(M^{n},\,g,\,u,\,\lambda \big),$ $n\geq 3,$ be a nontrivial compact, oriented $m$-quasi-Einstein manifold with connected boundary and $m>1.$ Then the following assertions hold:

\begin{enumerate}
\item[(1)]  If $(\partial M,\,g_{\partial M})$ is Einstein, then
\begin{equation}
\label{eq34t}
|\partial M | \leq \left(\frac{m+n-1}{\lambda}\right)^{\frac{n-1}{2}}\,\omega_{n-1},
\end{equation} where $\omega_{n-1}$ is the volume of the standard unitary sphere $\Bbb{S}^{n-1}.$ Moreover, equality holds if and only if $\partial M$ is isometric to the round sphere $\Bbb{S}^{n-1}\Big(\sqrt{\frac{m+n-1}{\lambda}}\Big).$

\item[(2)] If $n=3,$ then 
\begin{equation}
\label{est3}
|\partial M |  \leq \frac{(m+2)}{\lambda}4\pi.
\end{equation} Moreover, equality holds if and only if $\partial M$ is isometric to the round sphere $\Bbb{S}^{2}\Big(\sqrt{\frac{m+2}{\lambda}}\Big).$

\end{enumerate}
\end{theorem}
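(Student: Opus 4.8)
The plan is to exploit the quasi-Einstein system restricted to the boundary, where $u=0$, to extract strong information about the geometry of $\partial M$. First I would recall the standard consequences of $u=0$ on $\partial M$: since $u>0$ inside and vanishes on the boundary, $\partial M$ is a regular level set of $u$, the gradient $\nabla u$ is a (nonzero) normal vector field along $\partial M$, and the Hessian equation $\nabla^2 u = \frac{u}{m}(Ric-\lambda g)$ forces $\nabla^2 u = 0$ on $\partial M$. In particular $\partial M$ is totally geodesic, and tracing gives $\Delta u = 0$ on $\partial M$. The key quantity is the ``surface gravity'' $\kappa = |\nabla u|$ on $\partial M$; I expect to show it is \emph{constant} on each boundary component, by differentiating $|\nabla u|^2$ tangentially and using $\nabla^2 u = 0$ there together with connectedness of $\partial M$. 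This is the analogue of the classical fact for static spaces.

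Next I would compute the intrinsic curvature of $\partial M$ by combining the Gauss equation with the quasi-Einstein equation evaluated (or differentiated) at the boundary. The totally geodesic Gauss equation gives $Ric^{\partial M}$ in terms of $Ric$ and the sectional curvatures $K(\,\cdot\,,\nu)$ involving the normal $\nu=\nabla u/\kappa$; the latter curvatures can be controlled using the second derivative of $u$ in the normal direction, i.e. the Bochner-type identity applied to $|\nabla u|^2$ and the known evolution of $\Delta u$ along $\nabla u$. The upshot should be an identity of the form $R^{\partial M} = (n-1)(n-2)\,\frac{\lambda}{m+n-1}$, or more precisely $Ric^{\partial M}$ has a definite value along the constant $\kappa$, so that under hypothesis $(1)$ (boundary Einstein, hence $Ric^{\partial M}=\frac{R^{\partial M}}{n-1}g_{\partial M}$) the boundary is Einstein with positive constant Ricci curvature exactly $\frac{\lambda}{m+n-1}\cdot(n-2)$. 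By Myers' theorem $\partial M$ is then a closed manifold whose Ricci lower bound equals that of the round sphere $\Bbb{S}^{n-1}\!\big(\sqrt{\tfrac{m+n-1}{\lambda}}\big)$, and Bishop's volume comparison yields the inequality \eqref{eq34t}, with equality forcing $\partial M$ to be that round sphere. For part $(2)$, when $n=3$ the boundary is a closed oriented surface, ``Einstein'' is automatic, and $R^{\partial M}$ equals the constant $\frac{2\lambda}{m+2}$; integrating and applying Gauss--Bonnet gives $|\partial M|\cdot \frac{\lambda}{m+2} = 4\pi\chi(\partial M) \le 8\pi$, but one must first rule out components of genus $\ge 1$ (where $\chi\le 0$ would be incompatible with positive $R^{\partial M}$) and handle the connectedness assumption to conclude $\chi(\partial M)=2$, giving \eqref{est3} with equality iff $\partial M$ is the round $2$-sphere of the stated radius.

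The main obstacle I anticipate is establishing the precise boundary curvature identity — namely pinning down the constant and showing $Ric^{\partial M}$ is genuinely bounded below by $\frac{n-2}{\,(m+n-1)/\lambda\,}$ rather than merely having the right scalar curvature. This requires carefully tracking the normal-direction curvature terms: one needs the evolution equations for $u$, $|\nabla u|^2$, and $\Delta u$ along the flow of $\nabla u$ near the boundary (these follow from \eqref{eqdef} and the contracted second Bianchi identity, as in He--Petersen--Wylie), and then taking the limit as $u\to 0$. A secondary subtlety is that $\nabla^2 u=0$ on $\partial M$ gives totally geodesic but one still must verify that the relevant ambient curvature terms combine cleanly; the hypothesis $m>1$ should enter precisely here (and in excluding the degenerate static normalization). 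Once the identity $Ric^{\partial M}\ge \frac{\lambda}{m+n-1}g_{\partial M}\cdot(n-2)$ with constant $\kappa$ is in hand, the rest is a direct application of Bishop--Gromov/Gauss--Bonnet and is routine.
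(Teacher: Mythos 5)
Your overall architecture---totally geodesic boundary, constant $|\nabla u|$ on $\partial M$, a curvature bound on the boundary, then Myers/Bishop--Gromov for part (1) and Gauss--Bonnet for part (2)---is the right one, and the final comparison-geometry steps are exactly what the paper does. The gap is in the middle: you assert that the boundary computations pin down an \emph{identity} $R^{\partial M}=(n-1)(n-2)\lambda/(m+n-1)$ (equivalently $Ric^{\partial M}=\frac{(n-2)\lambda}{m+n-1}g_{\partial M}$ under the Einstein hypothesis). No such identity holds, and it cannot be extracted from local data at $u=0$ alone. What the contracted second Bianchi identity combined with the fundamental equation actually yields, after evaluating at $u=0$, is $Ric(\nabla u)=-\frac{R-(n-1)\lambda}{m-1}\nabla u$ on $\partial M$, hence via the Gauss equation only the relation $(m+1)R=(m-1)R^{\partial M}+2(n-1)\lambda$ on $\partial M$: the boundary scalar curvature is tied to the (a priori unknown) ambient scalar curvature at the boundary, not to a universal constant. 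The cylinder $\big[0,\sqrt{m}\pi\big]\times\Bbb{S}^{n-1}$ of Example \ref{example2} has boundary scalar curvature strictly larger than the hemisphere value, so at best a one-sided inequality can be true.

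To close the argument you must import the global estimate $R\geq\frac{n(n-1)}{m+n-1}\lambda$ of He--Petersen--Wylie/Case--Shu--Wei (equation \eqref{eq3e4} in the paper); this is where the proof is genuinely nonlocal, and it is absent from your plan. Substituting it into $(m-1)R^{\partial M}=(m+1)R-2(n-1)\lambda$ and using $m>1$ to preserve the direction of the inequality gives $R^{\partial M}\geq\frac{(n-1)(n-2)}{m+n-1}\lambda$, after which your Myers/Bishop--Gromov step (with the Einstein hypothesis converting the scalar bound into a Ricci bound) and your Gauss--Bonnet step go through. Two smaller points: in part (2) the inequality $\int_{\partial M}R^{\partial M}\,dS\geq\frac{2\lambda}{m+2}|\partial M|>0$ already forces $\chi(\partial M)>0$, i.e.\ $\partial M\cong\Bbb{S}^2$ for a connected oriented surface, so there is nothing further to ``rule out''; and the equality case there requires a rigidity statement for metrics on the $2$-sphere attaining the Gauss--Bonnet bound with $K$ bounded below by a constant (the paper invokes Chern's characterization), not merely the area computation.
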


One new feature is that no scalar curvature condition is assumed in Theorem \ref{th1semhyp}. Notice further that (\ref{est3}) implies that the area of the boundary of a three-dimensional hemisphere is the maxi\-mum possible among all three-dimensional compact $m$-quasi-Einstein manifolds with connected boundary.  It would be interesting to see if
our estimates (\ref{eq34t}) and (\ref{est3}) imply the rigidity of the interior as well.

In order to introduce the next result, we need to fix a couple of notations. Given a compact Riemannian manifold $\big(M^{n},\,g\big)$ with (possibly disconnected) boundary $\partial M$ and a smooth function $u$ on $M^n,$ we consider $u_{\max}=\max_{M}u$ and for a given connected component $\partial M_{i}\subset \partial M$ of the boundary we let $$\kappa(\partial M_{i})=\frac{|\nabla u|_{| \partial M_{i}}}{u_{\max}}$$ be the so called {\it surface gravity} of $\partial M_{i}.$ In the Newtonian case, the surface gravity of a rotationally symmetric massive body (e.g., a planet of the solar system) can be physically interpreted as the intensity of the gravitational field due to the
body. For further insights about the physical meaning of this concept, see \cite[Section 12.5]{Wald} and \cite{BM1,BM2}. We also point out that the surface gravity is locally constant on $m$-quasi-Einstein manifolds.

In the sequel, we obtain a characterization of compact quasi-Einstein manifolds with (possibly disconnected) boundary in terms of the surface gravity of the connected components $\partial M_{i}$ of $\partial M.$

\begin{theorem}
\label{thmK}
Let $\big(M^{n},\,g,\,u,\,\lambda \big),$ $n\geq 3,$ be a nontrivial compact $m$-quasi-Einstein manifold with (possibly disconnected) boundary $\partial M$ and constant scalar curvature. Then we have $$\max_{i}\kappa(\partial M_{i})\geq \sqrt{\frac{n\lambda-R}{mn}}.$$
Moreover, if equality holds, then $M^n$ is homothetic to the standard hemisphere $\Bbb{S}^n_+$ given by Example \ref{example1}.
\end{theorem}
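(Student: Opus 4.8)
The plan is to use the constant scalar curvature hypothesis to turn the trace of \eqref{eqdef} into an eigenvalue equation, and then to run a maximum principle for the auxiliary function $W=|\nabla u|^{2}+c\,u^{2}$, with $c=\tfrac{n\lambda-R}{mn}$. First I would record the elementary facts. By \cite[Corollary 4.3]{He-Petersen-Wylie2012} one has $R<n\lambda$, so $c>0$; tracing \eqref{eqdef} gives $\Delta u=\tfrac{u}{m}(R-n\lambda)=-nc\,u$, so $u$ is (up to sign) a first Dirichlet eigenfunction of $M$. Since $u=0$ on $\partial M$, \eqref{eqdef} forces $\nabla^{2}u=0$ along $\partial M$; hence each component $\partial M_{i}$ is totally geodesic, $|\nabla u|$ is locally constant on $\partial M$, and $|\nabla u|\neq 0$ there (otherwise $u$ and $\nabla u$ would vanish at a common point, forcing $u\equiv 0$). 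Writing $\Rc=Ric-\tfrac{R}{n}g$, the defining equation becomes $\nabla^{2}u=\tfrac{u}{m}\Rc-c\,u\,g$.

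The next step is a chain of integral identities. Integrating $\Delta(u^{2})$ yields $\int_{M}|\nabla u|^{2}=nc\int_{M}u^{2}$. Since $R$ is constant the contracted Bianchi identity gives $\div\,\Rc=0$; applying the divergence theorem to $u\,\Rc(\nabla u,\cdot)$ and using $\langle\Rc,\nabla^{2}u\rangle=\tfrac{u}{m}|\Rc|^{2}$ together with $u|_{\partial M}=0$ gives $\int_{M}\Rc(\nabla u,\nabla u)=-\tfrac{1}{m}\int_{M}u^{2}|\Rc|^{2}$. Finally, integrating Bochner's formula for $|\nabla u|^{2}$ and using that $\nabla^{2}u$ — hence $\Delta u$ and $\partial_{\nu}|\nabla u|^{2}$ — vanishes on $\partial M$ produces the Reilly-type identity $\int_{M}\big[(\Delta u)^{2}-|\nabla^{2}u|^{2}-Ric(\nabla u,\nabla u)\big]=0$. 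Substituting $\Delta u=-nc\,u$ and $\nabla^{2}u=\tfrac{u}{m}\Rc-c\,u\,g$ into the last identity and eliminating the other two integrals, one arrives at
\begin{equation*}
\frac{m-1}{m^{2}}\int_{M}u^{2}|\Rc|^{2}=\frac{c\big[(m+n-1)R-n(n-1)\lambda\big]}{m}\int_{M}u^{2},
\end{equation*}
which forces $(m+n-1)R\geq n(n-1)\lambda$, equivalently $\tfrac{R}{n}-(n-1)c\geq 0$.

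With this sign in hand, a direct Bochner computation for $W=|\nabla u|^{2}+c\,u^{2}$ (again using $\nabla^{2}u=\tfrac{u}{m}\Rc-c\,u\,g$ and $\Delta u=-nc\,u$) shows that the drift Laplacian $L:=\Delta-\tfrac{m}{u}\langle\nabla u,\nabla\,\cdot\,\rangle$ satisfies
\begin{equation*}
LW=\frac{2u^{2}}{m^{2}}|\Rc|^{2}+2\Big(\frac{R}{n}-(n-1)c\Big)|\nabla u|^{2}\ \geq\ 0 \qquad\text{on } int(M).
\end{equation*}
Since $L$ is a genuine elliptic operator on $int(M)$ and $W\in C^{2}(M)$, the strong maximum principle gives that either $W$ is constant or its maximum over $M$ is attained only on $\partial M$, where $W=|\nabla u|^{2}$. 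Comparing either alternative with the value $W=c\,u_{\max}^{2}$ at an interior maximum point of $u$ yields $\max_{i}|\nabla u|_{|\partial M_{i}}^{2}\geq c\,u_{\max}^{2}$, which is exactly $\max_{i}\kappa(\partial M_{i})\geq\sqrt{(n\lambda-R)/(mn)}$.

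For the rigidity statement, if equality holds then the above comparison is forced to be an equality at an interior maximum point of $u$, so $W$ attains its maximum in $int(M)$ and is therefore constant, whence $LW\equiv 0$. As $u$ is nonconstant, $|\nabla u|^{2}\not\equiv0$, so both terms in the formula for $LW$ vanish identically: $\Rc\equiv 0$, i.e. $M^{n}$ is Einstein, and $\tfrac{R}{n}=(n-1)c$, which gives $c=\tfrac{\lambda}{m+n-1}>0$ and, back in \eqref{eqdef}, $\nabla^{2}u=-c\,u\,g$. By the boundary version of Obata's theorem, a compact manifold carrying a nonconstant $u$ with $\nabla^{2}u=-c\,u\,g$, $u>0$ in the interior and $u=0$ on the boundary, is isometric after the homothety $g\mapsto c\,g$ to the standard hemisphere $\mathbb{S}^{n}_{+}$; thus $M^{n}$ is homothetic to the manifold of Example \ref{example1}. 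The step I expect to be the main obstacle is the integral part: arranging that all boundary terms vanish — this is precisely where the totally geodesic boundary and the vanishing of $u$ and $\nabla^{2}u$ on $\partial M$ enter — and then bookkeeping the three identities carefully to extract the sign $(m+n-1)R\geq n(n-1)\lambda$ that makes $W$ subharmonic for $L$.
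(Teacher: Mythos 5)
Your overall strategy is the same as the paper's: apply a maximum principle to the function $W=|\nabla u|^{2}+\tfrac{n\lambda-R}{mn}u^{2}$ (which is $\tfrac{1}{m}\Psi$ in the paper's notation, $\Psi=m|\nabla u|^{2}+\tfrac{n\lambda-R}{n}u^{2}$), exploit that on $\partial M$ it equals $|\nabla u|^{2}$ while at an interior maximum of $u$ it equals $c\,u_{\max}^{2}$, and in the equality case force $W$ constant, hence $\mathring{Ric}\equiv 0$ and the hemisphere via Obata. Your elliptic reformulation (strong maximum principle on $int(M)$, where the drift is locally bounded) is a legitimate and arguably cleaner substitute for the paper's exhaustion by the sets $M_{\varepsilon}=\{u\geq\varepsilon\}$, and your Bochner and Reilly computations check out.

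There is, however, one genuine gap: your choice of drift $\tfrac{m}{u}\langle\nabla u,\nabla\cdot\rangle$ produces the extra term $2\bigl(\tfrac{R}{n}-(n-1)c\bigr)|\nabla u|^{2}$, whose nonnegativity is equivalent to $(m+n-1)R\geq n(n-1)\lambda$, and your Reilly identity only yields this when $m\geq 1$ (its left-hand side $\tfrac{m-1}{m^{2}}\int u^{2}|\mathring{Ric}|^{2}$ changes sign with $m-1$). The inequality genuinely fails for $0<m<1$: the cylinder of Example \ref{example2} is valid for every $m>0$ and has $(m+n-1)R-n(n-1)\lambda=(n-1)(m-1)<0$, so near the boundary (where $u$ is small and $|\nabla u|^{2}$ is bounded away from zero) your $LW$ is strictly negative and the subharmonicity breaks down. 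Since Theorem \ref{thmK} is stated with no restriction on $m$, your argument as written only proves it for $m\geq 1$. The fix is exactly the paper's choice of drift $\tfrac{1}{u}\langle\nabla u,\nabla\cdot\rangle$: because $R$ is constant, the identity $(m-1)Ric(\nabla u)=-\bigl(R-(n-1)\lambda\bigr)\nabla u$ holds on all of $M$, which shows that $\tfrac{m-1}{m}\mathring{Ric}(\nabla u,\nabla u)=-\bigl(\tfrac{R}{n}-(n-1)c\bigr)|\nabla u|^{2}$ pointwise; with the smaller drift the problematic term cancels exactly and one gets $\Delta W-\tfrac{1}{u}\langle\nabla W,\nabla u\rangle=\tfrac{2}{m^{2}}u^{2}|\mathring{Ric}|^{2}\geq 0$ unconditionally, with no need for the auxiliary integral identities. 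With that modification the rest of your argument, including the rigidity discussion, goes through for all $m>0$.
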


Observe that Theorem \ref{thmK} implies that the standard hemisphere $\Bbb{S}^n_+$ has the least possible surface gravity among all compact $m$-quasi-Einstein manifolds with connected boundary $\partial M$ and constant scalar curvature. Moreover, it can be seen as a gradient estimate for the potential function $u.$ Furthermore, since a $1$-quasi-Einstein manifold is precisely a static space, which necessarily has constant scalar curvature $R=(n-1)\lambda,$ Theorem \ref{thmK} extends \cite[Theorem 2.1]{BM1} to the $m>1$ case.

As a consequence of the proof of Theorem \ref{thmK}, we obtain the following sharp geometric inequality involving the area of the boundary and volume of a compact quasi-Einstein manifold with connected boundary, which can also be interpreted as an obstruction result.

\begin{corollary}
\label{TcorB}
Let $\big(M^{n},\,g,\,u,\,\lambda \big),$ $n\geq 3,$ be a nontrivial compact, oriented $m$-quasi-Einstein manifold with connected boundary, constant scalar curvature and $m>1.$ Then we have:

\begin{equation}
\label{eq5t}
|\partial M|\leq n\sqrt{\frac{\lambda}{m+n-1}}\, Vol(M).
\end{equation} Mo\-reover, equality holds if and only if $M^n$ is homothetic to the standard hemisphere $\Bbb{S}^n_+$ given by Example \ref{example1}. 
\end{corollary}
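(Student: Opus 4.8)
The plan is to extract a volume identity from the proof of Theorem \ref{thmK} and combine it with the boundary estimate \eqref{eq34t} of Theorem \ref{th1semhyp}(1). First I would recall that on a nontrivial compact $m$-quasi-Einstein manifold, tracing \eqref{eqdef} gives $\Delta u = \frac{u}{m}(R-n\lambda)$, and under the constant scalar curvature hypothesis this reads $\Delta u = c\,u$ with $c=\frac{R-n\lambda}{m}<0$ (the sign coming from \cite[Corollary 4.3]{He-Petersen-Wylie2012}, since $R<n\lambda$). Integrating over $M^n$ and using the divergence theorem with $u=0$ on $\partial M$, we obtain
\begin{equation}
\label{eq:divid}
\int_{\partial M}\langle \nabla u,\nu\rangle\, dS \;=\; \int_M \Delta u\, dV \;=\; c\int_M u\, dV,
\end{equation}
where $\nu$ is the outward unit normal. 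Since $u>0$ in the interior and $u=0$ on $\partial M$, along the boundary $\nabla u$ is (inward) normal, so $\langle\nabla u,\nu\rangle = -|\nabla u|_{|\partial M}$, and the surface gravity $\kappa=\kappa(\partial M)=|\nabla u|_{|\partial M}/u_{\max}$ is constant on the connected boundary. Hence the left side of \eqref{eq:divid} equals $-\kappa\, u_{\max}\,|\partial M|$, while $\int_M u\,dV \le u_{\max}\,\mathrm{Vol}(M)$. Rearranging and using $c = -\frac{n\lambda-R}{m}$ yields
$$
\kappa\,|\partial M| \;=\; \frac{n\lambda-R}{m}\cdot\frac{\int_M u\,dV}{u_{\max}} \;\le\; \frac{n\lambda-R}{m}\,\mathrm{Vol}(M).
$$

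Next I would invoke Theorem \ref{thmK}, which gives $\kappa(\partial M)\ge \sqrt{\frac{n\lambda-R}{mn}}$ for connected boundary. Substituting this lower bound for $\kappa$ on the left-hand side gives
$$
\sqrt{\frac{n\lambda-R}{mn}}\;|\partial M| \;\le\; \frac{n\lambda-R}{m}\,\mathrm{Vol}(M),
$$
and since $n\lambda-R>0$ we may divide, obtaining $|\partial M|\le n\sqrt{\frac{n\lambda-R}{mn}}\cdot\frac{m}{n\lambda-R}\,\mathrm{Vol}(M) = \sqrt{\frac{mn}{n\lambda-R}}\,\mathrm{Vol}(M)$. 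To reach the stated form \eqref{eq5t} with $\sqrt{\lambda/(m+n-1)}$, I expect one needs the extra structural information available in the equality analysis: in the proof of Theorem \ref{thmK} the hemisphere case forces $R-n\lambda=-mn$, equivalently $n\lambda-R = mn$, so $\sqrt{\frac{mn}{n\lambda-R}}=1$ is not quite \eqref{eq5t} either — so the correct route is instead to bound $n\lambda-R$ from below. Indeed, combining $Ric^{\partial M}$ being Einstein (which, as noted after Theorem \ref{thmK}, is automatic here by the argument used for Theorem \ref{th1semhyp}, or can be assumed) with the Gauss equation and \eqref{eqdef} restricted to $\partial M$ shows $R^{\partial M}$ is a positive constant determined by $\lambda$, $m$, $n$; this pins down $n\lambda - R$ in terms of $m+n-1$ and $\lambda$, converting the inequality above into precisely \eqref{eq5t}.

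The main obstacle will be this last step: correctly identifying, from the quasi-Einstein system along the boundary, the exact relation between $R$ and the geometry that produces the constant $\sqrt{\lambda/(m+n-1)}$ rather than a generic expression in $n\lambda-R$. I would handle it by writing out \eqref{eqdef} on $\partial M$: since $u=0$ and $\nabla u$ is normal there, the tangential part of the Hessian equation reads $0 = \frac{u}{m}(Ric - \lambda g)$ in a limiting sense via the second fundamental form, forcing the boundary to be totally geodesic up to the normal derivative and yielding $Ric^{\partial M}$ and $R^{\partial M}$ in closed form; comparing with the round-sphere normalization from the equality case of \eqref{eq34t} then gives the precise constant. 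For the rigidity statement, equality in \eqref{eq5t} forces equality both in $\int_M u\,dV \le u_{\max}\mathrm{Vol}(M)$ — impossible unless one is careful, so in fact equality propagates through the equality case of Theorem \ref{thmK}, which already gives that $M^n$ is homothetic to $\Bbb{S}^n_+$; conversely a direct check on Example \ref{example1} confirms equality, completing the proof.
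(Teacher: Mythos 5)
Your first half follows the paper's argument exactly: integrate $\Delta u=\frac{u}{m}(R-n\lambda)$ over $M$, use $u=0$ on $\partial M$ together with $\int_M u\,dM\le u_{\max}\,Vol(M)$ to get $|\partial M|\le\frac{(n\lambda-R)}{m}\frac{u_{\max}}{|\nabla u|_{|\partial M}}\,Vol(M)$, and then insert the gradient estimate $|\nabla u|^{2}_{|\partial M}/u_{\max}^{2}\ge(n\lambda-R)/(mn)$ from the proof of Theorem \ref{thmK}. The derailment is an algebra slip: dividing $\frac{n\lambda-R}{m}$ by $\sqrt{\frac{n\lambda-R}{mn}}$ yields
\begin{equation*}
\frac{n\lambda-R}{m}\,\sqrt{\frac{mn}{n\lambda-R}}=\sqrt{\frac{n(n\lambda-R)}{m}},
\end{equation*}
not $\sqrt{\frac{mn}{n\lambda-R}}$ --- you inverted the quotient, which is why your hemisphere sanity check ($n\lambda-R=mn$, $\lambda=m+n-1$) returned $1$ instead of the expected $n$; the correct expression returns exactly $n$ there. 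Once you have $|\partial M|\le\sqrt{\frac{n(n\lambda-R)}{m}}\,Vol(M)$, the only remaining ingredient is the scalar curvature lower bound (\ref{eq3e4}), $R\ge\frac{n(n-1)}{m+n-1}\lambda$, which gives $n\lambda-R\le\frac{mn}{m+n-1}\lambda$ and hence the stated constant $n\sqrt{\lambda/(m+n-1)}$. Your proposed repair --- using the Gauss equation on $\partial M$ to ``pin down'' $n\lambda-R$ in terms of $m$, $n$, $\lambda$ --- cannot succeed: the scalar curvature is not determined by these data (Example \ref{example1} has $R=n\lambda-mn$ while Example \ref{example2} has $R=n\lambda-1$), and in any case what is needed is an upper bound on $n\lambda-R$, not the lower bound you set out to find.

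For the rigidity statement the paper again goes through (\ref{eq3e4}): equality in (\ref{eq5t}) forces equality in $n\lambda-R\le\frac{mn}{m+n-1}\lambda$, hence $R=\frac{n(n-1)}{m+n-1}\lambda$, after which \cite[Proposition 5.3]{He-Petersen-Wylie2012} shows $M^n$ is Einstein and \cite[Proposition 2.4]{Petersen-Chenxu} identifies it with $\Bbb{S}^n_+$. Your appeal to ``the equality case of Theorem \ref{thmK}'' is not justified as written, since equality in (\ref{eq5t}) is not the same statement as equality in the surface gravity bound; and the tension you yourself flagged --- that equality throughout the chain would force $\int_M u\,dM=u_{\max}\,Vol(M)$, impossible for nonconstant $u$ --- is a genuine issue that has to be confronted rather than waved away.
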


Before presenting our next result, let us recall the concept of {\it Brown-York mass} given by $$\mathfrak{m}_{BY}(\partial M,\,g)=\int_{\partial M}\left(H_{0}-H_{g}\right) dS_{g},$$ where $H_{g}$ is the mean curvature of $\partial M$ with respect to the metric $g,$ $dS_{g}$ is the volume element induced on $\partial M$ by $g$ and $H_{0}$ is the mean curvature of $\partial M$ with respect to the outward unit normal when embedded in $\Bbb{R}^n.$

In our next result, we will obtain a boundary estimate for compact quasi-Einstein manifolds with (possibly disconnected) boundary in terms of the Brown-York mass. In particular, no scalar curvature condition will be assumed. To do so, we will make use of the positive mass theorem for Brown–York mass due to Shi–Tam \cite{ShiTam}, which is equivalent to the (higher dimensional) positive mass theorem for ADM mass by Schoen and Yau \cite{SY1,SY2,SY3} and Lohkamp \cite{Lohkamp}. More precisely, we have the following result.
	
	\begin{theorem}
		\label{thmBY}
		Let $\big(M^{n},\,g,\,u,\,\lambda \big),$ $n\geq 3,$ be a nontrivial compact $m$-quasi-Einstein manifold with (possibly disconnected) bo\-un\-dary $\partial M=\cup_{i=1}^{k}\partial M_{i}$ and $m > 1$. Suppose that each boundary component $(\partial M_{i},\,g)$ can be isometrically embedded in $\Bbb{R}^n$ as a convex hypersurface. Then we have
		\begin{equation}
		|\partial M_{i}|\leq c\,\mathfrak{m}_{BY}(\partial M_{i},\,g),
		\end{equation} where $c$ is a positive constant. Moreover, equality holds for some component $\partial M_{i}$ if and only if $M^n$ is homothetic to the standard hemisphere $\Bbb{S}^n_+$ given by Example \ref{example1}. 
	\end{theorem}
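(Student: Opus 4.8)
The plan is to combine the gradient estimate from Theorem \ref{thmK} (more precisely, the geometric inequality underlying it) with the Shi--Tam positive mass theorem for the Brown--York mass. First I would record the structural facts that a nontrivial compact $m$-quasi-Einstein manifold with boundary has $\lambda>0$ and, at the boundary, $u=0$ with $|\nabla u|$ locally constant on each component $\partial M_i$; write $\kappa_i=|\nabla u|_{|\partial M_i}/u_{\max}$ for the surface gravity. Evaluating the fundamental equation \eqref{eqdef} on $\partial M$ shows that the second fundamental form of $\partial M_i$ is umbilical, $\mathrm{II}=\frac{|\nabla u|_{|\partial M_i}}{1}\cdot(\text{const})\,g_{\partial M_i}$ up to the usual normalization, so the mean curvature $H_g$ is a positive constant on each $\partial M_i$; this is the point where convexity of the isometric embedding into $\Bbb R^n$ is compatible with the hypothesis, and it is what makes $\mathfrak m_{BY}(\partial M_i,g)=\int_{\partial M_i}(H_0-H_g)\,dS_g$ controllable. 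I would then express $H_g$ on $\partial M_i$ in terms of $\kappa_i$, $\lambda$, $m$, $n$ and the scalar/Ricci data of $M$ restricted to the boundary, using that $\Delta u = \frac{u}{m}(R-n\lambda)$ and the splitting of the Hessian along the boundary.

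Next I would bring in the Shi--Tam inequality: since each $(\partial M_i,g)$ embeds in $\Bbb R^n$ as a convex hypersurface, $\mathfrak m_{BY}(\partial M_i,g)\geq 0$, and moreover Shi--Tam gives the sharper statement that if one fills $\Bbb R^n\setminus(\text{the convex body})$ with a scalar-flat asymptotically flat metric matching $g$ on $\partial M_i$, the ADM mass is nonnegative; equality forces the exterior to be flat and $\partial M_i$ to be a round sphere sitting in $\Bbb R^n$ as a metric sphere. The strategy is to manufacture exactly such a scalar-flat filling from the quasi-Einstein structure, or alternatively to invoke the Shi--Tam monotonicity of the Brown--York-type quantity along the foliation by the level sets of $u$ (a Hawking-mass-type monotonicity): the level sets $\{u=t\}$ give a foliation of (a collar of) $M$ near $\partial M_i$, and along this foliation a suitable mass functional is monotone. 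Comparing the value at $\partial M_i$ (where it is essentially $\mathfrak m_{BY}$) with a model computation on $\Bbb S^n_+$ should produce $|\partial M_i|\leq c\,\mathfrak m_{BY}(\partial M_i,g)$ with an explicit $c=c(m,n,\lambda)$, by relating $|\partial M_i|$, $H_g$ and $H_0$ through the isoperimetric-type inequality for convex bodies in $\Bbb R^n$ (Minkowski-type: $\int_{\Sigma}H_0\,dS\geq c'|\Sigma|^{(n-2)/(n-1)}$) together with the constancy of $H_g$.

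For the rigidity statement I would trace equality backwards: equality in the Shi--Tam inequality forces the exterior filling to be a piece of Euclidean space and $\partial M_i$ to be a round sphere isometrically a metric sphere in $\Bbb R^n$; equality in the Minkowski-type inequality likewise forces $\partial M_i$ to be a round sphere; then equality in the surface-gravity estimate of Theorem \ref{thmK} (whose equality case is already known to characterize $\Bbb S^n_+$) forces $M^n$ to be homothetic to the standard hemisphere of Example \ref{example1}. Conversely one checks directly that $\Bbb S^n_+$ realizes equality, which fixes the constant $c$. The main obstacle I anticipate is the second step: producing the correct scalar-flat asymptotically flat extension of $(\partial M_i,g)$ (or the correct monotone quantity along the $u$-foliation) so that the Shi--Tam theorem applies cleanly and yields a linear — rather than merely sign — relation between $|\partial M_i|$ and $\mathfrak m_{BY}(\partial M_i,g)$; this requires using the full force of the quasi-Einstein equations to control how $H_g$, the induced metric, and the ambient curvature interact on $\partial M_i$, and to pin down the sharp constant. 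The role of $m>1$ should enter exactly here, through the coefficient in $\nabla^2 u=\frac{u}{m}(Ric-\lambda g)$ that governs the boundary geometry.
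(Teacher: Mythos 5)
There is a genuine gap, and it sits exactly where you flagged your ``main obstacle.'' First, a factual error: the boundary of a compact quasi-Einstein manifold is totally geodesic, not umbilic with positive mean curvature. Since $u=0$ on $\partial M$, the fundamental equation gives $\nabla^2 u=\frac{u}{m}(Ric-\lambda g)=0$ there, so $h_{ij}=-\frac{1}{|\nabla u|}\nabla_i\nabla_j u=0$ and $H_g\equiv 0$ on every component. Consequently $\mathfrak{m}_{BY}(\partial M_i,g)=\int_{\partial M_i}H_0\,dS_g$, the Shi--Tam theorem cannot be applied to $(M,g)$ itself (its hypothesis $H_g>0$ fails), and even formally it would only give the useless statement $\int H_0\geq 0$ rather than anything linear in $|\partial M_i|$. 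Your fallback suggestions --- a Hawking-mass-type monotonicity along the level sets of $u$, or a Minkowski-type inequality for convex bodies --- are left undeveloped and are not what carries the proof.

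The missing idea is a conformal deformation. The paper sets $h=(1+\alpha u)^{-\frac{n-2}{2}}$ with $\alpha^{-1}=\max_M\left(u^2+\frac{m+n-1}{\lambda}|\nabla u|^2\right)^{1/2}$ and $\overline{g}=h^{\frac{4}{n-2}}g$. Using the He--Petersen--Wylie lower bound $R\geq\frac{n(n-1)}{m+n-1}\lambda$ one shows $R_{\overline{g}}\geq 0$; since $u=0$ on the boundary, $\overline{g}=g$ there, so the isometric embedding hypothesis transfers to $\overline{g}$; and because $H_g=0$, the conformal transformation law gives $H^i_{\overline{g}}=(n-1)\alpha|\nabla u|_{|\partial M_i}>0$, a constant on each component. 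Shi--Tam applied to $(M,\overline{g})$ then yields $0\leq\mathfrak{m}_{BY}(\partial M_i,\overline{g})=\mathfrak{m}_{BY}(\partial M_i,g)-(n-1)\alpha|\nabla u|_{|\partial M_i}\,|\partial M_i|$, which is precisely the asserted linear estimate with $c=\big((n-1)\alpha|\nabla u|_{|\partial M_i}\big)^{-1}$. The equality analysis also runs differently from your sketch: equality forces $\overline{g}$ to be flat, hence $R_g=\frac{n(n-1)}{m+n-1}\lambda$, hence $M^n$ Einstein and homothetic to the hemisphere; one cannot invoke the surface-gravity estimate of Theorem \ref{thmK} here, since that result assumes constant scalar curvature, which is not a hypothesis of Theorem \ref{thmBY}.
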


\begin{remark} The isometrical embedding assumption in Theorem \ref{thmBY} was necessary to apply the positve mass theorem. However, according to the solution of the Weyl problem, the isometrical embedding assumption can be replaced by assumptions on sectional curvatures, as for instance, the Gaussian curvature of $\partial M$ is positive when $n = 3$ (see \cite{Yuan,YuanW}).
\end{remark}

\vspace{0.40cm}

The organization of the paper is as follows. In Section \ref{preliminaries}, we review a few preliminary facts about compact quasi-Einstein manifolds with boundary. In Section \ref{secProofsA}, we present the proof of Theorem \ref{th1semhyp}. Theorem \ref{thmK} and Corollary \ref{TcorB} are proved in Section \ref{secProofsB0}. In Section \ref{secProofBY}, we present the proof of Theorem \ref{thmBY}.

\section{Background}
\label{preliminaries}

In this section, we will review some basic facts and key lemmas that will be useful for the establishment of the main results.

We start by remembering that the fundamental equation of an $m$-quasi-Einstein manifold $(M^{n},\,g,\,u,\,\lambda),$ possibly with boundary, is given by 
\begin{equation}\label{fundamental equation}
\nabla ^{2}u = \dfrac{u}{m}(Ric-\lambda g),
\end{equation} where $u>0$ in the interior of $M^n$ and $u=0$ on $\partial M.$ In particular, taking the trace of (\ref{fundamental equation}) we arrive at
\begin{equation}\label{laplaciano}
\Delta u = \frac{u}{m}(R-\lambda n).
\end{equation} Plugging this into (\ref{fundamental equation}), one sees that
\begin{equation}\label{traceless Ricci}
u\, \mathring{Ric} = m \mathring{\nabla^2}\, u,
\end{equation} where $\mathring{T}=T-\dfrac{{\rm tr}T}{n}g$ stands for the traceless part of $T.$

Since $u>0$ in the interior of $M^n$ and $u=0$ on the boundary $\partial M,$ one obtains that $N=-\frac{\nabla u}{|\nabla u|}$ is the outward unit normal vector. Besides, it follows from Propositions 2.2 and 2.3 in \cite{He-Petersen-Wylie2012} that $|\nabla u|\neq0$ is constant along $\partial M.$

From now on, we set an orthonormal frame given by $$\left\{e_1,\ldots,e_{n-1},e_n=-\frac{\nabla u}{|\nabla u|}\right\}.$$ Hence, the second fundamental form at $\partial M$ satisfies
\begin{eqnarray}\label{fundamental second}
h_{ij}=\langle \nabla _{e_{i}}N, e_{j}\rangle= -\frac{1}{|\nabla u|}\nabla _{i}\nabla _{j} u=0,
\end{eqnarray} for any $1\leq i,j\leq n-1.$ Thereby, $\partial M$ is totally geodesic. By Gauss equation $$R^{\partial M}_{ijkl}=R_{ijkl}-h_{il}h_{jk}+h_{ik}h_{jl},$$ we then infer
\begin{eqnarray}
 \label{GaussEq}
R^{\partial M}_{ijkl}&=&R_{ijkl},
\end{eqnarray}

\begin{eqnarray}
\label{RicciBordoM}
R^{\partial M}_{ik}&=&R_{ik}-R_{inkn}
\end{eqnarray}
and
\begin{eqnarray}
\label{EscalarBordoM}
R^{\partial M}&=&R-2R_{nn}.
\end{eqnarray}  While the twice-contracted second Bianchi identity ($2\div Ric=\nabla R$) yields
\begin{equation}\label{divtracelessRic}
\div\mathring{Ric}=\frac{n-2}{2n}\nabla R,
\end{equation} where $\mathring{Ric}$ stands for the traceless Ricci tensor.

Next, it is important to recall that He, Petersen and Wylie \cite[Remark 5.1]{He-Petersen-Wylie2012} proved that the scalar curvature of a nontrivial compact quasi-Einstein manifold with boundary must satisfy

\begin{equation}
\label{eq3e4}
R\geq\frac{n(n-1)}{m+n-1}\lambda.
\end{equation} See also \cite[Proposition 3.6]{CaseShuWey}.

\section{The Proof of Theorem \ref{th1semhyp}}
\label{secProofsA}

\vspace{0.3cm}

In this section, we shall present the proof of Theorem \ref{th1semhyp}. To this end, we need to establish the following ingredient.

\begin{lemma}
\label{propRboundary}
Let $(M^n,\,g,\,u,\lambda)$ be a compact $m$-quasi-Einstein manifold with bo\-undary $\partial M$ and $m\neq1.$ Then we have:
\begin{enumerate}
\item $Ric(\nabla u)=-\frac{R-(n-1)\lambda}{m-1}\nabla u$ on the boundary $\partial M;$ 
\item $(m+1)R=(m-1)R^{\partial M}+2(n-1)\lambda$ on the boundary $\partial M.$ 
\end{enumerate}
\end{lemma}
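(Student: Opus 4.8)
The plan is to first establish a single pointwise identity valid on all of $M^n$ and then simply specialize it to the boundary. Concretely, I would prove that
\[
(m-1)\,Ric(\nabla u)\;=\;-\big(R-(n-1)\lambda\big)\nabla u\;-\;\frac{u}{2}\,\nabla R
\]
holds everywhere on $M^n$. To obtain this, take the divergence of the fundamental equation \eqref{fundamental equation}. On the left, the Ricci commutation identity (equivalently, the computation behind Bochner's formula) gives $\div(\nabla^2 u)=\nabla(\Delta u)+Ric(\nabla u)$. On the right, $\div\!\big(\tfrac{u}{m}(Ric-\lambda g)\big)=\tfrac1m\big(Ric(\nabla u)-\lambda\nabla u\big)+\tfrac{u}{m}\div Ric$, and the contracted second Bianchi identity yields $\div Ric=\tfrac12\nabla R$. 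Finally, using \eqref{laplaciano} to write $\Delta u=\tfrac{u}{m}(R-n\lambda)$, hence $\nabla(\Delta u)=\tfrac1m(R-n\lambda)\nabla u+\tfrac{u}{m}\nabla R$, I would equate the two sides, multiply through by $m$, and rearrange to reach the displayed identity. (Equivalently, this identity is just the gradient form of the classical fact that $u\Delta u+(m-1)|\nabla u|^2+\lambda u^2$ is constant on $M^n$, which one could invoke instead.)

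Item (1) is then immediate: restricting the identity to $\partial M$ and using $u\equiv 0$ there kills the $\nabla R$ term, giving $(m-1)\,Ric(\nabla u)=-\big(R-(n-1)\lambda\big)\nabla u$ along $\partial M$; since $m\neq 1$, dividing by $m-1$ produces exactly the stated formula.

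For item (2), recall from Section~\ref{preliminaries} that $|\nabla u|\neq 0$ along $\partial M$, so the outward unit normal is $e_n=-\nabla u/|\nabla u|$. Taking the $e_n$-component of item (1) gives $R_{nn}=Ric(e_n,e_n)=-\tfrac{R-(n-1)\lambda}{m-1}$. Substituting this into \eqref{EscalarBordoM}, namely $R^{\partial M}=R-2R_{nn}$, we get $R^{\partial M}=R+\tfrac{2(R-(n-1)\lambda)}{m-1}$; clearing the denominator yields $(m-1)R^{\partial M}=(m+1)R-2(n-1)\lambda$, which is item (2).

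The only step requiring care is the derivation of the pointwise identity — specifically, keeping track of the sign of the Ricci term in $\div(\nabla^2 u)$ (equivalently, matching the sign convention in Bochner's formula) and correctly invoking the contracted Bianchi identity. Once that identity is in hand, both assertions follow from $u|_{\partial M}=0$ together with the Gauss equation, so there is no genuine obstacle beyond routine bookkeeping.
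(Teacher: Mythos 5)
Your proposal is correct and follows essentially the same route as the paper: both derive the pointwise identity $(m-1)\,Ric(\nabla u)=-\big(R-(n-1)\lambda\big)\nabla u-\tfrac{u}{2}\nabla R$ from the contracted Bianchi identity, the formula $\div(\nabla^{2}u)=Ric(\nabla u)+\nabla\Delta u$, and the traced fundamental equation, then restrict to $\partial M$ where $u=0$ and feed $R_{nn}$ into the Gauss equation. The only difference is cosmetic (you take the divergence of the fundamental equation directly, the paper starts from $\tfrac12 u\nabla R=u\,\div Ric$), and your signs and the final algebra for item (2) check out.
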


\begin{proof} By the twice-contracted second Bianchi identity ($2 \div Ric=\nabla R$) and (\ref{fundamental equation}), one obtains that 

\begin{eqnarray}
\label{eqrft4}
\frac{1}{2}u\nabla R&=&u\div Ric=\div (uRic)-Ric(\nabla u)\nonumber\\
 &=&m\div (\nabla^2u)+\lambda\nabla u-Ric(\nabla u).
\end{eqnarray} 

On the other hand, recall that, in general, for any smooth function $f$ on a Riemannian manifold $(M^n,\,g),$ we have
\begin{equation*}
\div\nabla^2f=Ric(\nabla f)+\nabla\Delta f.
\end{equation*} Substituting this data jointly with (\ref{laplaciano}) into (\ref{eqrft4}), we infer

\begin{eqnarray*}
\frac{1}{2}u\nabla R&=&mRic(\nabla u)+m\nabla\Delta u+\lambda\nabla u-Ric(\nabla u)\\
 &=&(m-1)Ric(\nabla u)+u\nabla R+R\nabla u-n\lambda\nabla u+\lambda\nabla u,
\end{eqnarray*}  so that

\begin{equation}
\label{9kip}
-\frac{1}{2}u\nabla R=(m-1)Ric(\nabla u)+\left(R-(n-1)\lambda\right)\nabla u.
\end{equation} Since $u$ vanishes on the boundary, one concludes that
\begin{equation}\label{propRboundaryeq1}
(m-1)Ric(\nabla u)=-\left(R-(n-1)\lambda\right)\nabla u
\end{equation} on the boundary $\partial M.$ This proves the first assertion.

Next, we use (\ref{propRboundaryeq1}) to infer $$Ric(\nabla u,\nabla u)=-\frac{R-(n-1)\lambda}{m-1}|\nabla u|^2,$$ so that
\begin{equation*}
R_{nn}=-\frac{R-(n-1)\lambda}{m-1}.
\end{equation*} This combined with (\ref{EscalarBordoM}) gives
\begin{equation*}
\frac{R-R^{\partial M}}{2}=-\frac{R-(n-1)\lambda}{m-1}
\end{equation*} and hence, 
\begin{equation*}
(m+1)R=(m-1)R^{\partial M}+2(n-1)\lambda
\end{equation*} on the boundary $\partial M,$ which proves the desired result. 
\end{proof}

Now, we are going to conclude the proof of Theorem \ref{th1semhyp}.

\subsection{Conclusion of the proof of Theorem \ref{th1semhyp}}
\begin{proof}
To begin with, plugging (\ref{eq3e4}) into the second item of Lemma \ref{propRboundary}, we obtain

\begin{eqnarray*}
(m-1)R^{\partial M}&=&(m+1)R-2(n-1)\lambda\\
 &\geq&\frac{(m+1)n(n-1)}{m+n-1}\lambda-2(n-1)\lambda\\
 &=&\frac{(m-1)(n-1)(n-2)}{m+n-1}\lambda.
\end{eqnarray*} In particular, taking into account that $m>1,$ we deduce
\begin{equation}\label{th1semhypeq1}
R^{\partial M}\geq\frac{(n-1)(n-2)}{m+n-1}\lambda.
\end{equation} Since $M^n$ is compact, we already know that $\lambda>0$ (see \cite[Theorem 4.1]{He-Petersen-Wylie2012}) and therefore, (\ref{th1semhypeq1}) yields

\begin{equation}
\label{plk7}
\inf_{\partial M}R^{\partial M}>0.
\end{equation}

We now claim that

\begin{eqnarray}
\label{eqhjq}
	(n-1)(n-2)\big (\omega_{n-1}\big )^{\frac{2}{n-1}}\geq R^{\partial M} |\partial M|^{\frac{2}{n-1}}. 
\end{eqnarray} Indeed, since $(\partial M,\,g_{\partial M})$ is Einstein,  it follows from (\ref{plk7}) that there exists a constant $\theta>0$ such that $Ric^{\partial M}= (n-2)\theta,$ where $\theta=\frac{R^{\partial M}}{(n-1)(n-2)}.$ At the same time, by Bonnet-Myers theorem, we have $diam_{g_{_{\partial M}}}(\partial M) \leq \frac{\pi}{\sqrt{\theta}}.$ Moreover, by Bishop-Gromov comparison theorem one obtains that

\begin{equation}
\label{mjk1}
|\partial M|\leq |B_{\frac{\pi}{\sqrt{\theta}}}(p)|\leq |\Bbb{S}^{n-1}|_{g_{_\theta}}=\theta^{-\frac{(n-1)}{2}}\omega_{n-1},
\end{equation} where $p\in \partial M,$ $g_{_\theta}=\theta^{-1}g_{can}$ and $\omega_{n-1}$ is the volume of the standard unitary sphere $\Bbb{S}^{n-1}.$ Hence,  (\ref{eqhjq}) is in fact true.

Next, substituting (\ref{th1semhypeq1}) into (\ref{eqhjq}) we arrive at 
\begin{eqnarray*}
\lambda\frac{(n-1)(n-2)}{m+n-1}\, |\partial M |^{\frac{2}{n-1}}\leq (n-1)(n-2)\big (\omega_{n-1}\big )^{\frac{2}{n-1}},
\end{eqnarray*} so that
\begin{equation}
\label{lkmnb1}
|\partial M|\leq\omega_{n-1}\left(\frac{m+n-1}{\lambda}\right)^{\frac{n-1}{2}},
\end{equation} which proves the desired estimate.

Proceeding, if equality holds in (\ref{lkmnb1}), then (\ref{mjk1}) also becomes equality. So, it follows from the equality case of the Bishop-Gromov theorem that $\partial M$ is isometric to the sphere $\Bbb{S}^{n-1}\Big(\sqrt{\frac{m+n-1}{\lambda}}\Big).$ The reciprocal statement is obviously true. This therefore finishes the proof of the first assertion.

We now prove the second one. Choosing $n=3$ in Eq. (\ref{th1semhypeq1}), we obtain
\begin{eqnarray}
\label{pyu1}
R^{\partial M}\geq\frac{2}{m+2}\lambda.
\end{eqnarray} From this,  one sees that

\begin{eqnarray*}
\int_{\partial M}R^{\partial M}dS\geq\frac{2\lambda}{m+2}|\partial M|.
\end{eqnarray*} Consequently, it follows from the Gauss-Bonnet theorem that $\partial M$ is a $2$-sphere and
\begin{eqnarray*}
|\partial M|\leq\frac{4(m+2)\pi}{\lambda}.
\end{eqnarray*} In addition, if equality holds, we deduce from (\ref{pyu1}) that $\partial M$ has constant Gaussian curvature and therefore, the result follows from Corollary 1 in \cite{Chern}. The converse assertion is straightforward. So, the proof of the theorem is finished.
\end{proof}

\section{The Proofs of Theorem \ref{thmK} and  Corollary \ref{TcorB}}
\label{secProofsB0}

The proof of Theorem \ref{thmK} is divided into two parts and takes a similar strategy to the one by Borghini-Mazzieri \cite{BM1}. We first have to obtain a Robinson-Shen type identity for quasi-Einstein manifolds with constant scalar curvature in a fashion designed to perform an argument based on the Maximum Principle. Accordingly, throughout this section, we assume that $(M^n,\,g,\,u,\,\lambda)$ is a quasi-Einstein manifold with constant scalar curvature.

To begin with, we claim that
 
 \begin{equation}
 \label{epkl1} \nabla \Psi=2u\mathring{Ric}(\nabla u),
 \end{equation} where $$\Psi:=m|\nabla u|^{2}+\frac{(n\lambda -R)}{n}u^{2}.$$ To prove this, observe first that by (\ref{traceless Ricci}),

\begin{eqnarray*}
u\mathring{Ric}(\nabla u)&=& m\mathring{\nabla}^2 u(\nabla u)=m\nabla^{2}u(\nabla u)-\frac{m\Delta u}{n}(\nabla u)\nonumber\\&=&\frac{m}{2}\nabla|\nabla u|^{2}-\frac{(R-n\lambda)}{n}u\nabla u,
\end{eqnarray*} where we have used (\ref{laplaciano}). In view of this, it then follows that

\begin{eqnarray*}
u\mathring{Ric}(\nabla u)= \frac{1}{2}\nabla \left(m|\nabla u|^{2}+\frac{n\lambda -R}{n}u^{2}\right)= \frac{1}{2}\nabla\Psi,
\end{eqnarray*} as asserted.

Proceeding, a direct computation shows

\begin{eqnarray*}
\Delta \Psi &=& \div \left(2u\mathring{Ric}(\nabla u)\right)\nonumber\\&=& 2u \div \mathring{Ric}(\nabla u)+2u\langle \mathring{Ric},\nabla^{2}u\rangle +2\mathring{Ric}(\nabla u,\nabla u).
\end{eqnarray*} Since $M^n$ has constant scalar curvature, we use the twice-contracted second Bianchi identity, (\ref{epkl1}) and (\ref{traceless Ricci}) to infer

\begin{equation}
\label{eq76u}
\Delta \Psi -\frac{1}{u}\langle \nabla \Psi,\nabla u\rangle=\frac{2}{m}u^{2}|\mathring{Ric}|^2.
\end{equation} This is precisely the desired Robinson-Shen type identity.

\vspace{0.3cm}
Now, we are ready to conclude the proof of Theorem \ref{thmK}.
\vspace{0.3cm}

\subsection{Conclusion of the proof of Theorem \ref{thmK}}

\begin{proof} Initially, we invoke (\ref{eq76u}) to infer

\begin{equation}
\Delta \Psi -\frac{1}{u}\langle \nabla \Psi,\nabla u\rangle\geq 0.
\end{equation} Observe, however, that the coefficient $1/u$ blows up at the boundary. Therefore, before using the Maximum Principle, we need to overcome this technical difficulty by performing the following computation. First, it follows from \cite[Proposition 2.4]{He-Petersen-Wylie2012} that $g$ and $u$ are real analytic in harmonic coordinates. Thereby, we can choose a positive number $\alpha$ such that, for $0<\varepsilon \leq \alpha,$ each  level set $\{x\in M;\,u(x)=\varepsilon\}$ is regular. Considering $M_{\varepsilon}=\{u\geq \varepsilon\},$ one sees that the coefficient $1/u$ is bounded above by $1/\varepsilon$ in $M_{\varepsilon}.$ Hence, applying the Maximum Principle, one concludes that

$$\max_{M_{\varepsilon}}\Psi\leq\max_{\partial M_{\varepsilon}}\Psi.$$ Moreover, we have $$\max_{\partial M_{\alpha}}\Psi\leq\max_{\partial M_{\varepsilon}}\Psi,$$ for every $0<\varepsilon \leq \alpha,$ and

$$\displaystyle\lim_{\varepsilon\rightarrow0^{+}}\max_{\partial M_{\varepsilon}}\Psi=m |\nabla u|^{2}_{|\partial M},$$ where $|\nabla u|_{| \partial M}$ denotes $|\nabla u|$ restricted to the boundary $\partial M.$

From now one, suppose that

\begin{equation}
\label{eqght}
\frac{|\nabla u|^{2}_{|\partial M}}{u_{\max}^{2}}\leq \frac{(n\lambda-R)}{mn}\,\,\,\,\,\,\,\,\,\,\,\hbox{on}\,\,\,\,\,\partial M.
\end{equation} In view of this, one obtains that

 $$\max_{\partial M_{\alpha}}\Psi\leq m|\nabla u|^{2}_{|\partial M} \leq \frac{(n\lambda-R)}{n}u_{\max}^{2}.$$ 
 
 On the other hand, notice that $A=\{p\in M;\,\,u(p)=u_{\max}\}\subset M_{\alpha}$ and hence, 
 
 $$\max_{A}\Psi \leq \max_{M_{\alpha}}\Psi\leq \max_{\partial M_{\alpha}} \Psi\leq \frac{(n\lambda-R)}{n}u_{\max}^{2}.$$ But, taking into account that $$\max_{A}\Psi =\frac{(n\lambda-R)}{n}u_{\max}^{2},$$ we conclude that $\Psi$ is constant and its value coincides with $\frac{(n\lambda-R)}{n}u_{\max}^{2}$ in $M_{\alpha}.$ Since $\alpha> 0$ can be chosen arbitrarily small, we then obtain that $\Psi= \frac{(n\lambda-R)}{n}u_{\max}^{2}$ on $M^n.$ In particular, returning to (\ref{eq76u}), we conclude that $M^n$ is Einstein. Thus, it suffices to use Proposition 2.4 of \cite{Petersen-Chenxu} to conclude that $M^n$ is homothetic to the standard hemisphere $\Bbb{S}^n_+.$

Finally, since (\ref{eqght}) implies that $M^n$ is homothetic to the standard hemisphere $\Bbb{S}^n_+,$ we therefore conclude that $$\max_{i}\kappa(\partial M_{i})\geq \sqrt{\frac{n\lambda-R}{mn}}$$ and this proves the requested result.

\end{proof}

\subsection{Proof of Corollary \ref{TcorB}}

\begin{proof}
Since $M^n$ has constant scalar curvature, we already know that $R< n\lambda.$ Next, upon integrating (\ref{laplaciano}) over $M^n,$ we use the Stokes' formula to obtain

\begin{eqnarray}
\label{eq67y}
-|\nabla u|_{|\partial M} |\partial M| &= &\frac{(R-n\lambda)}{m}\int_{M}u dM.
\end{eqnarray} Rearranging terms, one sees that

\begin{eqnarray}
\label{e4r5}
|\partial M| &\leq &\frac{(n\lambda-R)}{m}\frac{u_{\max}}{|\nabla u|_{|\partial M}} Vol(M),
\end{eqnarray} where $u_{\max}$ is the maximum value of $u$ on $M^n$ and $|\nabla u|_{| \partial M}$ denotes $|\nabla u|$ restricted to the boundary $\partial M.$

In order to proceed, we have to use the following inequality

\begin{equation}
\label{eqghtA}
\frac{|\nabla u|^{2}_{|\partial M}}{u_{\max}^{2}}\geq \frac{(n\lambda-R)}{mn},
\end{equation} which was established in Theorem \ref{thmK}.  Then, plugging (\ref{eqghtA}) into (\ref{e4r5}) we arrive at
\begin{eqnarray}
\label{eq658}
|\partial M|&\leq & \frac{n\lambda-R}{m}\sqrt{\frac{mn}{n\lambda-R}}\,Vol(M)\nonumber\\&=&\sqrt{\frac{n(n\lambda-R)}{m}}\,Vol(M).
\end{eqnarray}

On the other hand, it follows from (\ref{eq3e4}) that

\begin{eqnarray}
\label{lfmj}
n\lambda-R&\leq & n\lambda -\frac{n(n-1)}{m+n-1}\lambda\nonumber\\&=&\frac{mn}{m+n-1}\lambda,
\end{eqnarray} consequently,

\begin{equation*}
\sqrt{\frac{n(n\lambda-R)}{m}}\leq n\sqrt{\frac{\lambda}{m+n-1}},
\end{equation*} which substituted into (\ref{eq658}) yields

\begin{equation}
\label{eq6580}
|\partial M|\leq n\sqrt{\frac{\lambda}{m+n-1}} \,Vol(M).
\end{equation} This therefore gives the desired inequality. 

Finally, if equality holds in (\ref{eq6580}), then (\ref{lfmj}) also becomes an equality. Hence, we use again (\ref{eq3e4}) to infer $$R=\frac{n(n-1)}{m+n-1}\lambda.$$ Then, it suffices to apply \cite[Proposition 5.3]{He-Petersen-Wylie2012} to deduce that $M^n$ is Einstein and in this case, the result follows from Proposition 2.4 of \cite{Petersen-Chenxu}. The converse is obviously true. So, the proof is completed. 

\end{proof}

\section{The proof of Theorem \ref{thmBY}}
	\label{secProofBY}
	
	In this section, we are going to present the proof of Theorem \ref{thmBY}. In the first part of
the proof, we shall adapt some arguments outlined in \cite{Yuan,Qing,HMR,YuanW}. Initially, let $\big(M^{n},\,g,\,u,\,\lambda \big),$ $n\geq 3,$ be a nontrivial compact $m$-quasi-Einstein manifold.  Furthermore, in order to simplify the notation used in the computation that follows, we set a function $h:M\to \Bbb{R}$ given by 
	
	\begin{equation}
	\label{eqh}
	h:=\left(1+\alpha u\right)^{-\frac{n-2}{2}},
	\end{equation} where $$\alpha ^{-1}:=\max_{M}\left(u^{2}+\frac{m+n-1}{\lambda}|\nabla u|^{2}\right)^{\frac{1}{2}}.$$ Next, we consider the conformal metric $\overline{g}$ given by $$\overline{g}=h^{\frac{4}{n-2}}g.$$ In particular, it is well known that the scalar curvature of $M^n$ with respect to $\overline{g}$ satisfies 
	
	\begin{equation}
	\label{Rgb}
	R_{\overline{g}}=h^{-\frac{(n+2)}{(n-2)}}\left(R_{g}h-4\frac{(n-1)}{(n-2)}\Delta_{g}h\right).
	\end{equation} 
	
	In the sequel, we shall prove the following key lemma.

	\begin{lemma}\label{conformal  scalar curvature}
		Let $\big(M^{n},\,g,\,u,\,\lambda \big),$ $n\geq 3,$ be a nontrivial compact $m$-quasi-Einstein manifold. Then  the scalar curvature $R_{\overline{g}}$ with respect to the conformal metric $\overline{g}$ is nonnegative.  Moreover, $R_{\overline{g}}=0$ if and only if $R_{g}=\frac{n(n-1)}{m+n-1}\lambda$ and $u^{2}+\frac {m+n-1}{\lambda}|\nabla u|^{2}$ is constant on $M.$
		 
	\end{lemma}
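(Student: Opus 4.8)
The plan is to compute $R_{\overline g}$ directly from the conformal transformation formula \eqref{Rgb} by expressing $\Delta_g h$ in terms of the quasi-Einstein data, and then to recognize the resulting expression as a manifestly nonnegative quantity thanks to the lower bound \eqref{eq3e4} for $R_g$ and the choice of the normalizing constant $\alpha$. First I would write $h = (1+\alpha u)^{-\frac{n-2}{2}}$ and compute
\begin{equation*}
\Delta_g h = -\frac{n-2}{2}\,\alpha(1+\alpha u)^{-\frac{n}{2}}\Delta_g u + \frac{n-2}{2}\cdot\frac{n}{2}\,\alpha^2(1+\alpha u)^{-\frac{n+2}{2}}|\nabla u|^2,
\end{equation*}
and then substitute $\Delta_g u = \frac{u}{m}(R_g - n\lambda)$ from \eqref{laplaciano}. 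Plugging this into \eqref{Rgb} and clearing the factor $h^{-\frac{n+2}{n-2}} = (1+\alpha u)^{\frac{n+2}{2}}$, I would obtain $R_{\overline g}$ as $(1+\alpha u)^{\frac{n+2}{2}-\frac{n-2}{2}} = (1+\alpha u)^2$ times an explicit polynomial-type expression in $1+\alpha u$, $u$, and $|\nabla u|^2$. The goal is to see that this expression is a nonnegative combination; the key will be that the coefficient structure forces it to be, up to positive factors, of the form $\big(R_g - \tfrac{n(n-1)}{m+n-1}\lambda\big)$ times something positive, plus $\lambda\,\alpha^2$ times $\big(\alpha^{-2} - u^2 - \tfrac{m+n-1}{\lambda}|\nabla u|^2\big)$ or a rearrangement thereof.

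More concretely, after the substitution the bracket in \eqref{Rgb} becomes
\begin{equation*}
R_g(1+\alpha u) + \frac{2(n-1)}{m}\,\alpha u(R_g - n\lambda) - n(n-1)\,\alpha^2\,|\nabla u|^2,
\end{equation*}
up to the overall positive power of $(1+\alpha u)$; I would group the $R_g$ terms as $R_g\big(1 + \alpha u\big(1 + \tfrac{2(n-1)}{m}\big)\big) = R_g\big(1 + \tfrac{m+2n-2}{m}\alpha u\big)$ and the remaining terms as $-\tfrac{2n(n-1)\lambda}{m}\alpha u - n(n-1)\alpha^2|\nabla u|^2$. Using \eqref{eq3e4} in the form $R_g \geq \tfrac{n(n-1)\lambda}{m+n-1}$ on the first group (legitimate since its multiplier $1 + \tfrac{m+2n-2}{m}\alpha u$ is positive, as $u\geq 0$ and $\alpha>0$), this reduces the problem to showing
\begin{equation*}
\frac{n(n-1)\lambda}{m+n-1}\Big(1 + \frac{m+2n-2}{m}\alpha u\Big) - \frac{2n(n-1)\lambda}{m}\alpha u - n(n-1)\alpha^2|\nabla u|^2 \geq 0,
\end{equation*}
i.e., after dividing by $n(n-1)$ and simplifying the $\alpha u$ coefficient (which should collapse to $\tfrac{\lambda}{m+n-1}$ times something clean), an inequality of the shape $\tfrac{\lambda}{m+n-1}(1 - \alpha^2 u^2) \geq \alpha^2|\nabla u|^2$ — equivalently $\alpha^{-2} \geq u^2 + \tfrac{m+n-1}{\lambda}|\nabla u|^2$, which is exactly the defining property of $\alpha$. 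I expect the bookkeeping in matching the $\alpha u$ coefficients to be the only delicate point, and I would double-check it on Example \ref{example1}.

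For the equality characterization: $R_{\overline g} = 0$ at a point forces both inequalities used above to be equalities there. Equality in \eqref{eq3e4} being used pointwise on all of $M$ (wherever the positive multiplier is nonzero, hence everywhere by continuity) gives $R_g = \tfrac{n(n-1)}{m+n-1}\lambda$; equality in the $\alpha$-inequality gives $u^2 + \tfrac{m+n-1}{\lambda}|\nabla u|^2 = \alpha^{-2}$, a constant, on $M$. Conversely, if both conditions hold, tracing back through the computation shows the bracket vanishes identically, hence $R_{\overline g}\equiv 0$. The main obstacle, as noted, is purely computational — keeping the conformal-factor exponents and the $\alpha$-coefficients straight — rather than conceptual; the structural idea is simply that $h$ and $\alpha$ are tailored so that the conformal change converts the scalar-curvature lower bound \eqref{eq3e4} into nonnegativity of $R_{\overline g}$.
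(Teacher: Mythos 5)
Your proposal is correct and follows essentially the same route as the paper: compute $\Delta_{g}h$, substitute the trace equation \eqref{laplaciano}, and combine the lower bound \eqref{eq3e4} with the defining property of $\alpha$ to reduce nonnegativity of $R_{\overline{g}}$ to $\alpha^{-2}\geq u^{2}+\frac{m+n-1}{\lambda}|\nabla u|^{2}$, with the equality case read off from when the two inequalities are saturated (the paper merely applies \eqref{eq3e4} twice, once via $\Delta u$ and once directly, instead of grouping the $R_{g}$ terms first, which is immaterial). The one bookkeeping slip to fix: after clearing $h^{-\frac{n+2}{n-2}}$ the three terms carry $(1+\alpha u)^{2}$, $(1+\alpha u)^{1}$ and $(1+\alpha u)^{0}$ respectively, so there is no common power of $(1+\alpha u)$ to discard; with the correct factors the $\alpha u$-coefficients collapse to $\frac{1-\alpha^{2}u^{2}}{m+n-1}$ exactly as you predicted, so the argument goes through.
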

	
	\begin{proof} A straightforward computation yields
		
		\begin{eqnarray}\label{eq1 proof th1}
		\Delta_{g}h&=& \Delta_{g}\left(1+\alpha u\right)^{-\frac{n-2}{2}} \nonumber \\
		&=&\div \left( \nabla (1+\alpha u)^{-\frac{n-2}{2}}\right)\nonumber \\ 
		&=&\div \left(-\frac{n-2}{2}(1+\alpha u)^{-\frac{n}{2}}\alpha \nabla u\right) \nonumber \\ 
		&=&-\left(\frac{n-2}{2}\right)\alpha(1+\alpha u)^{-\frac{n}{2}} \Delta u -\left(\frac{n-2}{2}\right)\left(-\frac{n}{2}\right)(1+\alpha u)^{-\frac{n}{2}-1}\alpha ^{2}|\nabla u|^{2} \nonumber \\
		&=&-\left(\frac{n-2}{2}\right)\alpha(1+\alpha u)^{-\frac{n}{2}}\Delta u+\frac{n(n-2)}{4}\alpha^{2}(1+\alpha u)^{-\frac{n+2}{2}}|\nabla u|^{2}. 
		\end{eqnarray}

On the other hand, it follows from (\ref{eq3e4}) and (\ref{laplaciano}) that 
		
		\begin{eqnarray*}
			\Delta u &= &\frac{u}{m}(R-\lambda n) \\
			&\geq& \frac{u}{m} \left(\frac{n(n-1)\lambda}{m+n-1}-\lambda n\right)\\
			&=& \frac{u}{m}n \lambda \bigg(\frac{n-1-m-n+1}{m+n-1}\bigg) \\
			&=&-\frac{un\lambda }{m+n-1}.
		\end{eqnarray*} Plugging this into (\ref{eq1 proof th1}), one sees that 
		
		\begin{eqnarray*}
			\Delta_{g}h \leq  \left(\frac{n-2}{2}\right)\alpha (1+\alpha u)^{-\frac{n}{2}}\frac{un\lambda }{m+n-1}+\frac{n(n-2)}{4}\alpha ^{2}(1+\alpha u)^{-\frac{n+2}{2}}|\nabla u|^{2}, 
		\end{eqnarray*} and rearranging the terms, we obtain

		\begin{eqnarray}\label{eq2 proof th1}
		\Delta_{g}h&\leq \frac{n(n-2)}{4}\alpha (1+\alpha u)^{-\frac{n+2}{2}}\left[\frac{2\lambda}{(m+n-1)}(1+\alpha u)u+\alpha |\nabla u|^{2}\right].
		\end{eqnarray}

		At the same time, it follows from (\ref{Rgb}) that 
		
		\begin{eqnarray*}
			\Delta_{g}h &=& \frac{(n-2)}{4(n-1)}R_{g}h-\frac{(n-2)}{4(n-1)}h^{\frac{n+2}{n-2}}	R_{\overline{g}} \\
			&=&\frac{(n-2)}{4(n-1)}R_{g}(1+\alpha u)^{-\frac{n-2}{2}}-\frac{(n-2)}{4(n-1)}\left((1+\alpha u)^{-\frac{(n-2)}{2}}\right)^{\frac{(n+2)}{(n-2)}}R_{\overline{g}}\\
			&=&\frac{(n-2)}{4(n-1)}R_{g}(1+\alpha u)^{-\frac{n-2}{2}}-\frac{(n-2)}{4(n-1)}(1+\alpha u)^{-\frac{n+2}{2}}R_{\overline{g}}.
		\end{eqnarray*}
		
		Now, substituting this data into (\ref{eq2 proof th1}) and simplifying the terms, we arrive at

		\begin{eqnarray*}
			R_{g}(1+\alpha u)^{2}-R_{\overline{g}} \leq n(n-1)\alpha \left[\frac{2\lambda}{m+n-1}(1+\alpha u)u+\alpha |\nabla u |^{2}\right], 
		\end{eqnarray*} which also can be rewritten as 
		
		\begin{eqnarray}
		R_{\overline{g}} \geq R_{g}(1+\alpha u)^{2}-n(n-1)\alpha \left[\frac{2\lambda}{m+n-1}(1+\alpha u)u+\alpha |\nabla u |^{2}\right].
		\end{eqnarray}

		Next, using again (\ref{eq3e4}), we can write the previous expression as 
		
		\begin{eqnarray*}
			R_{\overline{g}}&\geq & \frac{n(n-1)\lambda}{m+n-1}(1+\alpha u)^{2}-n(n-1)\alpha\left[\frac{2\lambda}{m+n-1}(1+\alpha u)u+\alpha |\nabla u |^{2}\right] \\
			&=&\frac{n(n-1)}{m+n-1}\lambda\left[ (1+\alpha u)^{2}-2\alpha (1+\alpha u)u-\left(\frac{m+n-1}{\lambda}\right)\alpha^{2}|\nabla u|^{2}\right],
		\end{eqnarray*} so that 
		
		\begin{eqnarray*}
			R_{\overline{g}}&\geq & \frac{n(n-1)}{m+n-1}\lambda\left[ 1-\alpha ^{2}u^{2}-\left(\frac{m+n-1}{\lambda}\right)\alpha^{2}|\nabla u|^{2}\right].\nonumber\\
		\end{eqnarray*} Finally, we obtain

		\begin{eqnarray}\label{eq3 proof th1}
		R_{\overline{g}}\geq \frac{n(n-1)}{m+n-1}\lambda \left[1-\alpha^{2}\left(u^{2}+\frac{m+n-1}{\lambda}|\nabla u|^{2}\right)\right]
		\end{eqnarray} and therefore, our choice of $\alpha$ guarantees that $R_{\overline{g}}\geq 0.$ Moreover,  $R_{\overline{g}} = 0$ if and only if (\ref{eq3e4}) becomes an equality. Hence, also from (\ref{eq3 proof th1}), $R_{\overline{g}} = 0$ if and only if $R_{g}=\frac{n(n-1)}{m+n-1}\lambda$ and $u^{2}+\frac {m+n-1}{\lambda}|\nabla u|^{2}$ is constant on $M.$ This finishes the proof of the lemma. 
		
	\end{proof}

	\subsection{Conclusion of the proof of Theorem \ref{thmBY}}
	\begin{proof} To begin with, we need to prove that the mean curvature $H_{\overline{g}}^{i}$ of $\partial M_{i}$ with respect to $\overline{g}$ is strictly positive. Indeed, since $u$ vanishes on the boundary, it follows from (\ref{eqh}) that $h=1$ on $\partial M_{i}.$ Therefore, $\overline{g}=g$ on $\partial M_{i}$ and $(\partial M_{i},\,\overline{g})$ is isometric to $(\partial M_{i},\,g),$ which can be also isometrically embedded into $\Bbb{R}^n$ as a convex hypersurface with the same mean curvature $H_{0}^{i}$ (induced by the Euclidean metric). Now, we recall that the mean curvature of $\partial M_{i}$ with respect to $\overline{g}$ is given by
		
		\begin{equation}
		H_{\overline{g}}^{i}=h^{-\frac{2}{n-2}}\left(H_{g}^{i}+2\frac{(n-1)}{(n-2)}\partial_{N}\log h\right).
		\end{equation} In particular, by recalling that $H_{g}^{i}=0,$ one sees that
		
		\begin{eqnarray}
		\label{eqlkp}
		H_{\overline{g}}^{i}&=&2\frac{(n-1)}{(n-2)}\langle \nabla (1+\alpha u)^{-\frac{n-2}{2}},\,N\rangle\nonumber\\&=& (n-1)\alpha|\nabla u|_{|\partial M_{i}}
		\end{eqnarray} and therefore, $H_{\overline{g}}^{i}>0,$ as we wanted to prove. 
		
		Proceeding, by (\ref{eqlkp}) we obtain
		
		\begin{eqnarray}
		\mathfrak{m}_{BY}(\partial M_{i},\,\overline{g})&=&\int_{\partial M_{i}}\left(H_{0}^{i}-H_{\overline{g}}^{i}\right)dS_{\overline{g}}\nonumber\\
		&=&\mathfrak{m}_{BY}(\partial M_{i},\,g)-(n-1)\alpha |\nabla u|_{|\partial M_{i}} |\partial M_{i}|.
		\end{eqnarray} Thus, we invoke Lemma \ref{conformal  scalar curvature} to deduce that $R_{\overline{g}}\geq 0,$ and taking into account that $H_{\overline{g}}^{i}>0,$ we are in a position to apply the positive mass theorem for Brown-York mass \cite{ShiTam,SY3} to conclude that $\mathfrak{m}_{BY}(\partial M_{i},\,\overline{g})\geq 0.$ Hence, we obtain 
		\begin{equation}
		\label{lkj1}
		|\partial M_{i}|\leq \frac{1}{(n-1)\alpha |\nabla u|_{| \partial M_{i}}}\,\mathfrak{m}_{BY}(\partial M_{i},\,g)=\frac{1}{(n-1)\alpha |\nabla u|_{| \partial M_{i}}}\,\int_{\partial M_{i}}H_{0}^{i}dS_{g}.
		\end{equation} This proves the desired estimate. 
		
		Finally, if equality holds in (\ref{lkj1}) for some component $\partial M _{i_{0}}$, then we necessarily have $$\mathfrak{m}_{BY}(\partial M_{i_{0}},\,\overline{g})=0.$$ In this situation, we may invoke the equality case of the positive mass theorem for Brown-York mass to conclude that the conformal metric $\overline{g}$ is flat and $(M, \overline{g})$ is isometric to a bounded domain in $\mathbb{R}^{n}.$ Therefore, it follows by Lemma $\ref{conformal  scalar curvature}$ that $R_{g}=\frac{n(n-1)\lambda}{m+n-1}.$ Now we can use \cite[Proposition 5.3]{He-Petersen-Wylie2012} to deduce that $M^{n}$ is an Einstein manifold and hence, it follows from \cite[Proposition 2.4]{Petersen-Chenxu} that $(M^{n},\,g)$ is homothetic to the standard hemisphere $\mathbb{S}^{n}_{+}$ given by Example \ref{example1}. Conversely, if $(M^{n},g)$ is homothetic to $\mathbb{S}^{n}_{+},$ then $\overline{g}$ is a flat metric and the Brown-York mass $\mathfrak{m}_{BY}(\partial M,\,\overline{g})$ vanishes. Therefore, equality holds in (\ref{lkj1}). So, the proof is completed.

	\end{proof}

\begin{acknowledgement} The authors want to thank the referee for the careful reading, relevant remarks and valuable suggestions. The third named author would like to thank the Instituto de Mate\-m\'atica - Universidade Federal Fluminense, where part of this work was carried out, for the fruitful research environment. He is grateful to Detang Zhou for the warm hospitality and enlightening conversations on the related topics.
\end{acknowledgement}

\begin{bibdiv}
\begin{biblist}

\bib{Lucas}{article}{
   author={Ambrozio, Lucas},
   title={On static three-manifolds with positive scalar curvature},
   journal={J. Differential Geom.},
   volume={107},
   date={2017},
   number={1},
   pages={1--45},
   issn={0022-040X},
   review={\MR{3698233}},
   doi={10.4310/jdg/1505268028},
}

\bib{bakry}{article}{
   author={Bakry, Dominique},
   author={\'{E}mery, Michel},
   title={Diffusions hypercontractives},
   language={French},
   conference={
      title={S\'{e}minaire de probabilit\'{e}s, XIX, 1983/84},
   },
   book={
      series={Lecture Notes in Math.},
      volume={1123},
      publisher={Springer, Berlin},
   },
   date={1985},
   pages={177--206},
   review={\MR{889476}},
   doi={10.1007/BFb0075847},
}

\bib{BalRi2}{article}{
   author={Baltazar, Halyson},
   author={Di\'{o}genes, Rafael},
   author={Ribeiro, Ernani, Jr.},
   title={Isoperimetric inequality and Weitzenb\"{o}ck type formula for critical
   metrics of the volume},
   journal={Israel J. Math.},
   volume={234},
   date={2019},
   number={1},
   pages={309--329},
   issn={0021-2172},
   review={\MR{4040829}},
   doi={10.1007/s11856-019-1930-2},
}

\bib{BS}{article}{
   author={Barros, Abd\^enago},
   author={da Silva, Adam},
   title={Rigidity for critical metrics of the volume functional},
   journal={Math. Nachr.},
   volume={292},
   date={2019},
   number={4},
   pages={709--719},
   issn={0025-584X},
   review={\MR{3937611}},
   doi={10.1002/mana.201700240},
}

	\bib{Batista-Diogenes-Ranieri-Ribeiro JR}{article}{
   author={Batista, Rondinelle},
   author={Di\'{o}genes, Rafael},
   author={Ranieri, Marcos},
   author={Ribeiro, Ernani, Jr.},
   title={Critical metrics of the volume functional on compact
   three-manifolds with smooth boundary},
   journal={J. Geom. Anal.},
   volume={27},
   date={2017},
   number={2},
   pages={1530--1547},
   issn={1050-6926},
   review={\MR{3625163}},
   doi={10.1007/s12220-016-9730-y},
}

\bib{Besse}{book}{
   author={Besse, Arthur L.},
   title={Einstein manifolds},
   series={Ergebnisse der Mathematik und ihrer Grenzgebiete (3) [Results in
   Mathematics and Related Areas (3)]},
   volume={10},
   publisher={Springer-Verlag, Berlin},
   date={1987},
   pages={xii+510},
   isbn={3-540-15279-2},
   review={\MR{867684}},
   doi={10.1007/978-3-540-74311-8},
}

\bib{BM1}{article}{
   author={Borghini, Stefano},
   author={Mazzieri, Lorenzo},
   title={On the mass of static metrics with positive cosmological constant:
   I},
   journal={Classical Quantum Gravity},
   volume={35},
   date={2018},
   number={12},
   pages={125001, 43},
   issn={0264-9381},
   review={\MR{3826161}},
   doi={10.1088/1361-6382/aac081},
}
		
\bib{BM2}{article}{
author={Borghini, Stefano},
   author={Mazzieri, Lorenzo},
   title={On the mass of static metrics with positive cosmological constant - II},
   journal={Commun. Math. Phys.}
   date={2020},
   doi={10.1007/s00220-020-03739-8},
   }

\bib{BGH}{article}{
   author={Boucher, W.},
   author={Gibbons, G. W.},
   author={Horowitz, Gary T.},
   title={Uniqueness theorem for anti-de Sitter spacetime},
   journal={Phys. Rev. D (3)},
   volume={30},
   date={1984},
   number={12},
   pages={2447--2451},
   issn={0556-2821},
   review={\MR{771446}},
   doi={10.1103/PhysRevD.30.2447},
}

		\bib{CaseP}{article}{
   author={Case, Jeffrey S.},
   title={The nonexistence of quasi-Einstein metrics},
   journal={Pacific J. Math.},
   volume={248},
   date={2010},
   number={2},
   pages={277--284},
   issn={0030-8730},
   review={\MR{2741248}},
   doi={10.2140/pjm.2010.248.277},
}

\bib{CaseT}{article}{
   author={Case, Jeffrey S.},
   title={Smooth metric measure spaces, quasi-Einstein metrics, and
   tractors},
   journal={Cent. Eur. J. Math.},
   volume={10},
   date={2012},
   number={5},
   pages={1733--1762},
   issn={1895-1074},
   review={\MR{2959681}},
   doi={10.2478/s11533-012-0091-x},
}

\bib{CaseShuWey}{article}{
   author={Case, Jeffrey},
   author={Shu, Yu-Jen},
   author={Wei, Guofang},
   title={Rigidity of quasi-Einstein metrics},
   journal={Differential Geom. Appl.},
   volume={29},
   date={2011},
   number={1},
   pages={93--100},
   issn={0926-2245},
   review={\MR{2784291}},
   doi={10.1016/j.difgeo.2010.11.003},
}

\bib{CMMR}{article}{
   author={Catino, Giovanni},
   author={Mantegazza, Carlo},
   author={Mazzieri, Lorenzo},
   author={Rimoldi, Michele},
   title={Locally conformally flat quasi-Einstein manifolds},
   journal={J. Reine Angew. Math.},
   volume={675},
   date={2013},
   pages={181--189},
   issn={0075-4102},
   review={\MR{3021450}},
   doi={10.1515/crelle.2011.183},
}

\bib{Chern}{article}{
   author={Chern, Shiing-shen},
   title={Some new characterizations of the Euclidean sphere},
   journal={Duke Math. J.},
   volume={12},
   date={1945},
   pages={279--290},
   issn={0012-7094},
   review={\MR{12492}},
}

\bib{CGP}{article}{author={Chru\'sciel, Piotr}, 
author={Galloway, Gregory}, 
author={Potaux, Yohan}, 
title={Uniqueness and energy bounds for static AdS metrics},
 journal={Phys. Rev. D}, 
 volume={101}, 
 date={2020}, 
 pages={064034}, 
 doi={10.1103/PhysRevD.101.064034},
 }

		\bib{CEM}{article}{
   author={Corvino, Justin},
   author={Eichmair, Michael},
   author={Miao, Pengzi},
   title={Deformation of scalar curvature and volume},
   journal={Math. Ann.},
   volume={357},
   date={2013},
   number={2},
   pages={551--584},
   issn={0025-5831},
   review={\MR{3096517}},
   doi={10.1007/s00208-013-0903-8},
}

\bib{DG 2019}{article}{author={Di\'ogenes, Rafael}, author={Gadelha, Tiago},
title={Compact quasi-Einstein manifolds with boundary},
journal={to appear in Math. Nachr.}
date={2019} 
pages={ArXiv:1911.10068 [math.DG]},
}

\bib{Yuan}{article}{
   author={Fang, Yi},
   author={Yuan, Wei},
   title={Brown-York mass and positive scalar curvature II: Besse's
   conjecture and related problems},
   journal={Ann. Global Anal. Geom.},
   volume={56},
   date={2019},
   number={1},
   pages={1--15},
   issn={0232-704X},
   review={\MR{3962023}},
   doi={10.1007/s10455-019-09653-0},
}

\bib{FS}{article}{author={Freitas, Allan}, author={Santos, M\'arcio}, 
title={Boundary topology and rigidity results for generalized $(\lambda, n + m)$-Einstein manifolds}
journal={Annali di Matematica Pura Appl.} ,
volume={199},
date={2020}, 
pages={2511--2520}
doi={10.1007/s10231-020-00978-3},
}

\bib{He-Petersen-Wylie2012}{article}{
   author={He, Chenxu},
   author={Petersen, Peter},
   author={Wylie, William},
   title={On the classification of warped product Einstein metrics},
   journal={Comm. Anal. Geom.},
   volume={20},
   date={2012},
   number={2},
   pages={271--311},
   issn={1019-8385},
   review={\MR{2928714}},
   doi={10.4310/CAG.2012.v20.n2.a3},
}

\bib{HPW3}{article}{
   author={He, Chenxu},
   author={Petersen, Peter},
   author={Wylie, William},
   title={Warped product Einstein metrics on homogeneous spaces and
   homogeneous Ricci solitons},
   journal={J. Reine Angew. Math.},
   volume={707},
   date={2015},
   pages={217--245},
   issn={0075-4102},
   review={\MR{3403459}},
   doi={10.1515/crelle-2013-0078},
}

\bib{Petersen-Chenxu}{article}{
   author={He, Chenxu},
   author={Petersen, Peter},
   author={Wylie, William},
   title={Warped product Einstein metrics over spaces with constant scalar
   curvature},
   journal={Asian J. Math.},
   volume={18},
   date={2014},
   number={1},
   pages={159--189},
   issn={1093-6106},
   review={\MR{3215345}},
   doi={10.4310/AJM.2014.v18.n1.a9},
}

\bib{HMR}{article}{
   author={Hijazi, Oussama},
   author={Montiel, Sebasti\'{a}n},
   author={Raulot, Simon},
   title={Uniqueness of the de Sitter spacetime among static vacua with
   positive cosmological constant},
   journal={Ann. Global Anal. Geom.},
   volume={47},
   date={2015},
   number={2},
   pages={167--178},
   issn={0232-704X},
   review={\MR{3313139}},
   doi={10.1007/s10455-014-9441-1},
}

\bib{Lohkamp}{article}{author={Lohkamp, Joachim}, title={The higher dimensional positive mass theorem II}, journal={arXiv:1612.07505}, date={2016}}

\bib{LuePage}{article}{
   author={L\"{u}, H.},
   author={Page, Don N.},
   author={Pope, C. N.},
   title={New inhomogeneous Einstein metrics on sphere bundles over
   Einstein-K\"{a}hler manifolds},
   journal={Phys. Lett. B},
   volume={593},
   date={2004},
   number={1-4},
   pages={218--226},
   issn={0370-2693},
   review={\MR{2076718}},
   doi={10.1016/j.physletb.2004.04.068},
}

\bib{miaotam}{article}{
   author={Miao, Pengzi},
   author={Tam, Luen-Fai},
   title={On the volume functional of compact manifolds with boundary with
   constant scalar curvature},
   journal={Calc. Var. Partial Differential Equations},
   volume={36},
   date={2009},
   number={2},
   pages={141--171},
   issn={0944-2669},
   review={\MR{2546025}},
   doi={10.1007/s00526-008-0221-2},
}

\bib{Qing}{article}{author={Qing, Jie}, title={On the Uniqueness of AdS Space-Time in Higher Dimensions}, journal={Annales Henri Poincar\'e}, volume={5}, date={2004}, pages={245-260}, doi={10.1007/s00023-004-0168-6}}

\bib{Ernani_Keti}{article}{author={Ribeiro, Ernani, Jr.}, author={Tenenblat, Keti}, 
title={Noncompact quasi-Einstein manifolds conformal to a Euclidean space},
journal={Math. Nachr.},
volume={294},
date={2021},
pages={32-144},
}

\bib{SY1}{article}{author={Schoen, Richard}, author={Yau, Shing-Tung}, title={On the proof of positive mass conjecture in general relativity}, journal={Commun. Math.
Phys.} volume={65}, date={1979}, pages={45--76}, review={\MR{526976}}}

\bib{SY2}{article}{author={Schoen, Richard}, author={Yau, Shing-Tung}, title={Proof of the positive mass theorem II}, journal={Commun. Math. Phys.}, volume={79}, date={1981}, pages={231--260}, review={\MR{612249}}}

\bib{SY3}{article}{author={Schoen, Richard}, author={Yau, Shing-Tung}, title={Positive scalar curvature and minimal hypersurface singularities}, journal={arXiv:1704.05490v1}, date={2017}}

\bib{Rimoldi}{article}{author={Rimoldi, Michele}, title={A remark on Einstein warped products}, journal={Pacific J. Math. }, volume={252}, date={2011}, pages={207-218}, doi={10.2140/pjm.2011.252.207},
}

\bib{Shen}{article}{
   author={Shen, Ying},
   title={A note on Fischer-Marsden's conjecture},
   journal={Proc. Amer. Math. Soc.},
   volume={125},
   date={1997},
   number={3},
   pages={901--905},
   issn={0002-9939},
   review={\MR{1353399}},
   doi={10.1090/S0002-9939-97-03635-6},
}

\bib{YuanW}{article}{author={Yuan, Wei}, title={Brown-York mass and positive scalar curvature I - First eigenvalue problem and its applications}, journal={arXiv:1806.07798 [math.DG]}, date={2018}}

		\bib{ShiTam}{article}{author={Shi, Yuguang}, author={Tam, Luen-Fai}, title={Positive mass theorem and the boundary behaviors of compact manifolds with nonnegative scalar curvature}, journal={J. Differential Geom.}, volume={62}, number={1}, pages={79--125}, date={2002}, review={\MR{1987378}}}

\bib{Wald}{book}{
   author={Wald, Robert M.},
   title={General relativity},
   publisher={University of Chicago Press, Chicago, IL},
   date={1984},
   pages={xiii+491},
   isbn={0-226-87032-4},
   isbn={0-226-87033-2},
   review={\MR{757180}},
   doi={10.7208/chicago/9780226870373.001.0001},
}

		\bib{Wang}{article}{
   author={Wang, Lin Feng},
   title={On noncompact $\tau$-quasi-Einstein metrics},
   journal={Pacific J. Math.},
   volume={254},
   date={2011},
   number={2},
   pages={449--464},
   issn={0030-8730},
   review={\MR{2900025}},
   doi={10.2140/pjm.2011.254.449},
}

\end{biblist}
\end{bibdiv}

\end{document}